\documentclass[11pt]{article}

\usepackage[utf8]{inputenc}
\usepackage[british]{babel}
\usepackage[a4paper, margin=2.5cm]{geometry}
\usepackage{amsmath, amsthm, amssymb, amsfonts}
\usepackage{mathtools}
\usepackage{thmtools}
\usepackage[shortlabels]{enumitem}
\usepackage[font={small, it}]{caption}
\usepackage{microtype}
\usepackage{xcolor}
\usepackage{mathabx}
\usepackage{tikz}
\usepackage{hyperref}
\usepackage{bm}


\newcommand{\cF}{\ensuremath{\mathcal F}}

\newcommand{\cP}{\ensuremath{\mathcal P}}




\renewcommand{\phi}{\varphi}
\renewcommand{\rho}{\varrho}


\let\setminus=\smallsetminus


\newcommand{\vv}{\mathbf{v}}





\declaretheorem[parent=section]{theorem}

\declaretheorem[sibling=theorem]{conjecture}


\setlength{\parindent}{0em}
\setlength{\parskip}{0.5em}
\setlist{itemsep=0.1em, topsep=0.1em, parsep=0.1em, partopsep=0.1em}

\hypersetup{
  colorlinks,
  linkcolor={red!60!black},
  citecolor={green!50!black},
  urlcolor={blue!80!black}
}

\colorlet{RoyalRed}{red!70!black}
\definecolor{RoyalBlue}{rgb}{0.25, 0.41, 0.88}
\definecolor{RoyalAzure}{rgb}{0.0, 0.22, 0.66}

\newlength{\bibitemsep}\setlength{\bibitemsep}{0.5pt}
\newlength{\bibparskip}\setlength{\bibparskip}{0.5pt}
\let\oldthebibliography\thebibliography
\renewcommand\thebibliography[1]{%
  \oldthebibliography{#1}%
  \setlength{\parskip}{\bibitemsep}%
  \setlength{\itemsep}{\bibparskip}%
}

\title{Probabilistic intuition holds for a class of small subgraph games}
\author{
  Rajko Nenadov\thanks{Google Z\"urich. Email: \texttt{rajkon@gmail.com}.}
}
\date{}

\begin{document}
\maketitle

\begin{abstract}
Consider the following two-player game on the edges of $K_n$, the complete graph with $n$ vertices: Starting with an empty graph $G$ on the vertex set of $K_n$, in each round the first player chooses $b \in \mathbb{N}$ edges from $K_n$ which have not previously been chosen, and the second player immediately and irrevocably picks one of these edges and adds it to $G$. We show that for any graph $H$ with at least one edge, if $b < c n^{1/m(H)}$, where $c = c(H) > 0$ only depends on $H$ and $m(H)$ is the usual density function, then the first player can ensure the resulting graph $G$ contains $\Omega(n^{v(H)} / b^{e(H)})$ copies of $H$. The bound on $b$ is the best possible apart from the constant $c$ and shows that the density of the resulting graph for which it is possible to enforce the appearance of $H$ coincides with a threshold for the appearance in the Erd\H{o}s-R\'enyi random graph. This resolves a conjecture by Bednarska-Bzd\c ega, Hefetz, and \L uczak and provides a prominent class of games for which probabilistic intuition accurately predicts the outcome. The strategy of the first player is deterministic with polynomial running time, with the degree depending on the size of $H$.
\end{abstract}

\section{Introduction} \label{sec:introduction}

The Erd\H{o}s-R\'enyi random graph model $G(n, m)$, for some $n, m \in \mathbb{N}$ with $m \le \binom{n}{2}$, can be defined as follows: Start with an empty graph $G$ with $n$ vertices, and in each of the following $m$ rounds choose one edge uniformly at random among all edges in $\overline{G}$, the complement of $G$, and add it to $G$. One of the most prominent features of this model is that for any nontrivial monotone graph property $\cP$ there exists a function $m_0(n)$, called a \emph{threshold}, such that for $m = m(n)$ the following holds: 
$$
    \lim_{n \rightarrow \infty} Pr[ G(n, m) \in \cP] = \begin{cases}
    1, \; m \gg m_0(n)\\
    0, \; m \ll m_0(n).
    \end{cases}
$$
This remarkable result was proven by Bollob\'as and Thomason \cite{bllobas87threshold}. Here we are interested in the property of containing a (small) graph $H$, where `small' indicates that the size of $H$ does not depend on $n$. The problem of finding a threshold for the appearance of small subgraphs has already been studied in the seminal paper by Erd\H{o}s and R\'enyi \cite{erdos60renyi} and was fully resolved by Bollob\'as \cite{bollobas81smallsubgraphs}:

\begin{theorem}
For any graph $H$ with $e(H) \ge 1$, $m_0(H,n) = n^{2 - 1/m(H)}$ is a threshold for the property of containing $H$, where
$$
    m(H) = \max \left\{ \frac{e(H')}{v(H')} \colon H' \subseteq H, v(H') \ge 1 \right\}.
$$
\end{theorem}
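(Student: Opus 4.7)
The plan is to prove the $0$- and $1$-statements separately, working primarily in $G(n,p)$ with $p = m/\binom{n}{2}$; since the property of containing $H$ is monotone and $|E(G(n,p))|$ is sharply concentrated around $p\binom{n}{2}$, the transfer between $G(n,p)$ and $G(n,m)$ is routine.

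For the $0$-statement, suppose $m \ll n^{2-1/m(H)}$, so $p \ll n^{-1/m(H)}$. Let $H^* \subseteq H$ be a subgraph achieving the maximum in the definition of $m(H)$, so $e(H^*) = m(H) \cdot v(H^*)$. Any copy of $H$ contains a copy of $H^*$, so it suffices to show whp that $G(n,p)$ contains no copy of $H^*$. The expected number of labelled copies is
\[
    \Theta\!\left(n^{v(H^*)} p^{e(H^*)}\right) = \Theta\!\left((n \cdot p^{m(H)})^{v(H^*)}\right),
\]
which is $o(1)$ because $n \cdot p^{m(H)} \ll n \cdot n^{-1} = o(1)$. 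Markov's inequality then completes this direction.

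For the $1$-statement, suppose $m \gg n^{2-1/m(H)}$, equivalently $p \gg n^{-1/m(H)}$. Let $X$ count labelled copies of $H$ in $G(n,p)$, so $\E[X] = \Theta(n^{v(H)} p^{e(H)}) \to \infty$. By Chebyshev's inequality it suffices to show $\Var(X) = o(\E[X]^2)$. Decomposing $X = \sum_\sigma \1_\sigma$ over embeddings and grouping covariance contributions by the overlap graph $F = \sigma(H) \cap \tau(H)$ (noting that vertex-disjoint pairs contribute zero covariance) yields
\[
    \frac{\Var(X)}{\E[X]^2} \le O(1) \cdot \sum_{\emptyset \ne F \subseteq H} \frac{1}{n^{v(F)} p^{e(F)}}.
\]
Writing $p = \omega(1) \cdot n^{-1/m(H)}$, I have $n^{v(F)} p^{e(F)} = \omega(1)^{e(F)} \cdot n^{v(F) - e(F)/m(H)}$, which tends to infinity because $e(F)/v(F) \le m(H)$ holds for every non-empty $F \subseteq H$ by the definition of $m(H)$ (the degenerate case $e(F) = 0$ is handled separately, using $v(F) \ge 1$). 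Hence $\Var(X) = o(\E[X]^2)$, as required.

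The conceptual content is that the single density $m(H)$ governs both moments: the densest subgraph $H^*$ is the first obstruction to forcing $\E[\#H] \to 0$ as $p$ shrinks, while the maximality condition $e(F) \le m(H) \cdot v(F)$ for every subgraph is precisely what kills all pairwise-overlap terms in the second moment. Matching these two requirements forces the particular maximum-density formula for $m(H)$; beyond identifying this link, the argument is a textbook application of the first and second moment methods, so the only genuine obstacle is keeping the bookkeeping over all subgraphs $F \subseteq H$ clean.
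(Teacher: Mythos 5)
This theorem is quoted in the paper as background and is not proved there (it is cited to Bollob\'as), so the comparison point is the classical argument, and your proposal is exactly that standard first/second moment proof: first moment applied to a densest subgraph $H^*$ for the $0$-statement, and the Paley--Zygmund/Chebyshev-type second moment computation with overlap graphs $F\subseteq H$ for the $1$-statement, which is correct, including the key observation that $e(F)\le m(H)\,v(F)$ for every non-empty $F$ makes each term $n^{v(F)}p^{e(F)}$ tend to infinity. Two cosmetic points only: in the $0$-statement the chain should read $n\,p^{m(H)}\ll 1$, hence the expectation $(n\,p^{m(H)})^{v(H^*)}=o(1)$ (as written, ``$n\cdot n^{-1}=o(1)$'' is a slip), and the transfer from $G(n,p)$ to $G(n,m)$ that you wave at is indeed routine by monotonicity but deserves the one-line justification.
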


Problems about the appearance of $H$ in $G(n,m)$ or, more generally, about the number of copies of $H$, still form a very active area of research. For a thorough treatment of the topic, we refer the reader to a monograph by Frieze and Karo\'nski \cite{introduction_random_graphs}.

Combining random graphs with the paradigm `power of multiple choices', Achlioptas suggested the following one-player game on the edges of $K_n$, the complete graph with $n$ vertices: Starting with an empty graph $G$ on the vertex set of $K_n$, in each step sample $r$ edges uniformly at random among all the edges of $K_n$ which have not been previously sampled; the player then picks one of these edges immediately and decisively and adds it to $G$. Given a graph $H$ and a constant $r \ge 1$, Krivelevich, Loh, and Sudakov \cite{krivelevich09avoiding} defined an \emph{avoidance-threshold} as a function $a_r(H, n)$ such that, with high probability, the player has a strategy which avoids the appearance of $H$ in the first $m \ll a_r(H, n)$ rounds, and no strategy can avoid the appearance after $m \gg a_r(H, n)$ rounds. Note that for $r = 1$, the player has no choices and the Achlioptas process matches $G(n, m)$, thus $m_0(H, n) = a_1(H, n)$. However, already for $r \ge 2$ we have $a_r(H, n) = n^{2 - 1/m(H) + \theta(H, r)}$ for a suitably defined $\theta(H, n) > 0$ \cite{krivelevich09avoiding,torsten11multiple}. In other words, the player can create an $H$-free graph with significantly larger density than $n^{-1/m(H)}$, whereas a random graph of such density with high probability contains (many) copies of $H$. Bednarska-Bzd\c ega, Hefetz, and \L uczak \cite{bednarsa14picker} conjectured that if random choices in the Achlioptas process are replaced by another player, then the appearance (and the number of copies) of $H$ in the resulting graph corresponds to the appearance in a random graph with the same density. Let us make this precise.

Consider the following two-player version of the Achlioptas process: Starting with an empty graph $G$ on the vertex set of $K_n$, in each round the first player chooses $b \in \mathbb{N}$ edges from $K_n$ which have not previously been chosen, and the second player chooses one of these edges and adds it to $G$. Games of this type were introduced by Beck \cite{beck02picker} under the name \emph{Picker-Chooser}, and in the recent literature they have been referred to as \emph{Waiter-Client} games (e.g.\ see \cite{bednarska13weight,bednarska16manipulative,bednarsa14picker,hefetz16wc,hefetz17wcham}). The first player is called \emph{Waiter} and the second \emph{Client}. We refer to the game with a parameter $b$ as the \emph{$b$-Waiter-Client} game.

As in the case of postponing the appearance of $H$ in the Achlioptas processes, the goal of Client is to avoid (or at least minimise the number of) copies of $H$ in $G$, whereas the goal of Waiter is to offer the edges such that this number is maximised.  Unlike in the Achlioptas process, where the player was able to `outsmart' the random sampling, Bednarska-Bzd\c ega, Hefetz, and \L uczak \cite{bednarsa14picker} suggested that Waiter can counter any strategy of Client: If the resulting graph has $m \gg n^{2 - 1/m(H)}$ edges ($b \ll n^{1/m(H)}$) then Waiter has a strategy which ensures the existence of $H$ in $G$, whereas for $m \ll n^{2 - 1/m(H)}$ ($b \gg n^{1/m(H)}$) Client can avoid $H$ throughout the whole game. In other words, the situation is essentially the same as if $G$ was a random graph $G(n, m)$ or, using the terminology from positional game theory, the $b$-Waiter-Client $H$-game follows the \emph{probabilistic intuition}. They proved the case $b \gg n^{1/m(H)}$ and conjectured that the other direction holds as well:

\begin{conjecture} \label{conj:probabilistic_intuition}
For every graph $H$ with $e(H) \ge 1$, there exist $c, \gamma > 0$ such that in a $b$-Waiter-Client game on $K_n$, for $n$ sufficiently large and $b \le c n^{1/m(H)}$, Waiter can ensure the resulting graph contains at least $\gamma n^{v(H)} / b^{e(H)}$ copies of $H$.
\end{conjecture}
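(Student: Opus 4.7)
The plan is to design a deterministic strategy for Waiter based on maintaining a potential function that mimics the expected number of copies of $H$ in a random subgraph of $K_n$ of edge-density $1/b$. Writing $G$ for Client's current graph and $D$ for the set of edges already offered but not chosen, we define
$$
\phi(G, D) \;=\; \sum_{\substack{H_0 \subseteq K_n,\; H_0 \cong H \\ E(H_0) \cap D = \emptyset}} \left(\tfrac{1}{b}\right)^{e(H) - |E(H_0) \cap G|}.
$$
Initially $\phi_0 = \Theta(n^{v(H)}/b^{e(H)})$, while at the end of the game every surviving copy has all its edges in $G$ and contributes exactly $1$, so $\phi_{\mathrm{final}}$ equals the number of copies of $H$ in $G$. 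It therefore suffices to exhibit a strategy for which $\phi$ does not drop by more than a constant factor.

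The per-round analysis goes as follows. If Waiter offers a $b$-set $F = \{f_1,\dots,f_b\}$ of unplayed edges and Client responds with $f_j$, then an alive copy $H_0$ has its weight multiplied by $b$ when $E(H_0)\cap F = \{f_j\}$, is wiped out when some edge of $H_0$ lies in $F \setminus \{f_j\}$, and is unchanged otherwise. Setting $S(A) = \sum_{H_0 : E(H_0)\cap F = A} w_{H_0}$ and $\phi_F = \sum_{A\ne\emptyset} S(A)$, one obtains $\Delta\phi_j = b\,S(\{f_j\}) - \phi_F$. Averaging over $j$ yields the identity $\tfrac{1}{b}\sum_j \Delta\phi_j = -\,S_{\ge 2}(F)$, where $S_{\ge 2}(F) := \sum_{H_0 : |E(H_0)\cap F|\ge 2} w_{H_0}$ is the collision weight of the offering, while the pessimal Client move satisfies $-\Delta\phi_{\mathrm{worst}} \le \phi_F - b\min_j S(\{f_j\})$.

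Waiter's strategy is to pick $F$ each round so as to simultaneously (i) keep the collision weight small and (ii) keep the singleton marginals $S(\{f_j\})$ approximately balanced. Because a pair of edges lying in a common copy of $H$ corresponds to a two-edge subgraph of some $H' \subseteq H$ with $e(H')/v(H') \le m(H)$, a uniformly random $F$ drawn from the $N$ unplayed edges satisfies $\E[S_{\ge 2}(F)] = O(b^2/N^2)\cdot \phi$, and the hypothesis $b \le c n^{1/m(H)}$ is exactly what ensures this remains a small fraction of $\phi$. Combined with balancing the marginals to within a multiplicative factor of $1 + O(1/\log n)$, which we try to achieve via a concentration argument on the edge-potentials $\sigma(f) = \sum_{H_0 \ni f,\; H_0\ \mathrm{alive}} w_{H_0}$, the per-round worst-case loss becomes $O(b/N)\phi$; the cumulative multiplicative loss compounds to $\exp\!\bigl(-O\bigl(\sum_{t} b/N_t\bigr)\bigr) = \exp(-O(1))$, so $\phi_{\mathrm{final}} = \Omega(\phi_0)$. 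The method of conditional expectations, applied to both objectives simultaneously and round by round, then derandomises the argument to a polynomial-time strategy.

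I expect the most delicate step to be the balancing in (ii). Controlling the \emph{worst-case} Client move so that the per-round loss is $O(b/N)\phi$, rather than the trivially-bounded $\phi_F$, is essential: under the weaker bound the cumulative factor is $n^{-\Theta(1)}$ and the conclusion becomes vacuous, since $\sum_t b/N_t$ is logarithmic. The natural route is an inductive treatment over the subgraphs $H' \subseteq H$ with $m(H') = m(H)$, which saturate the density condition and therefore drive both the tight collision counts and the largest fluctuations of $\sigma(f)$. Handling them may require augmenting $\phi$ with auxiliary terms (for instance $\lambda \sum_{f\ \mathrm{unplayed}}\sigma(f)^2$) whose own growth must be absorbed into the main budget; this interplay is where the constant $c = c(H)$ in the statement will come from.
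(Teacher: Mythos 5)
Your proposal takes a genuinely different route from the paper's, and it has a real gap that you yourself flag but do not close.

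The paper proves the conjecture by induction on $e(H)$, playing on the complete $H$-partite graph $K_n^H$ so that Waiter can settle one edge-class $V_x\times V_y$ at a time. At each stage it maintains three invariants simultaneously: a lower bound on the number of canonical copies of the current $H_e = H\setminus e$, an upper bound on copies of every induced $H'\subseteq H_e$, and, crucially, an upper bound on copies of the ``double'' graphs $H_e^J$ obtained by gluing two copies of $H_e$ along each induced $J$. These upper bounds force the copies of $H_e$ to be spread out over $V_x\times V_y$ (a Paley--Zygmund argument extracts a large set $L$ of pairs each lying in roughly the average number of copies), and then Waiter simply offers fresh pairs from a trimmed $L'$ while preventing the double-count invariants from blowing up. The whole difficulty of ``which edge will Client avoid?'' is dissolved because any edge Client picks from $L'$ does equally well.

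Your potential-function approach is the natural one to try, and your per-round identity --- that averaging over Client's $b$ choices loses exactly the collision weight $S_{\ge 2}(F)$ --- is correct and elegant. But the proof hinges entirely on converting that average-case bound into a worst-case bound, and that is the step you wave at with ``which we try to achieve via a concentration argument on the edge-potentials $\sigma(f)$'' and ``may require augmenting $\phi$ with auxiliary terms ... whose own growth must be absorbed into the main budget.'' This is precisely where the argument can fail: if Client manages to funnel the surviving weight through a few hub edges so that the marginals $\sigma(f)$ are wildly unbalanced, then no $b$-tuple $F$ is safe to offer (Client just avoids the hubs), and the worst-case loss per round is $\Theta(\phi)$, not $O(b/N)\,\phi$. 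There is nothing in your argument that rules this concentration out; indeed the reason the analogous Maker--Breaker $H$-game does \emph{not} follow probabilistic intuition is exactly that a naive potential argument only gives the stronger $2$-density threshold. The paper's invariant on $H^J$-counts is doing the work you would need your auxiliary $\sum_f \sigma(f)^2$ term to do, but maintaining a bound on that second moment under an adversarial Client is itself a game-theoretic statement and must be proved; you have only proposed it. As written, the proposal identifies the right obstacle and the right kind of fix, but does not supply a proof of the balancing step, so the argument is incomplete at exactly the point where the theorem is hard.
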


As supporting evidence, Conjecture \ref{conj:probabilistic_intuition} was verified in \cite{bednarsa14picker} in the case where $H$ is a complete graph. A reader with some knowledge of positional game theory might find this conjecture  surprising at first, having in mind that in the \emph{Maker-Breaker} games, the most studied type of positional games, a prominent example where probabilistic intuition fails is precisely an $H$-game, for any fixed $H$ \cite{bednarska00randomstrategy}. Nonetheless, in this note we show Conjecture \ref{conj:probabilistic_intuition} is true.

\section{Proof of Conjecture \ref{conj:probabilistic_intuition}} \label{sec:proof}

Since Waiter's goal is to force many copies of $H$ in the resulting graph $G$, we consider a slight variation of the game where Waiter can decide, at any point, that the game is finished. Adding more edges certainly cannot decrease the number of copies of $H$, thus finishing the game earlier seems somewhat counter-intuitive. However, doing so will help to maintain the number of copies of certain graphs stemming from $H$ under control which, in turn, will imply that the copies of $H$ are well distributed.

To state the proof concisely we introduce some notation. For a given graph $H$, let $V(H) = \{h_1, \ldots, h_{v(H)}\}$. As Waiter can call for an early end of the game, instead of playing on $K_n$ we can restrict our attention to games on $K_q^H \subset K_n$, the complete \emph{$H$-partite} graph:  $V(K_q^H)$ is a disjoint union of sets $V_1, \ldots, V_{v(H)}$, each of size $q = \lfloor n / v(H) \rfloor$, and there is an edge between $v \in V_i$ and $u \in V_j$ iff $\{h_i, h_j\} \in H$.

Given an induced $J \subseteq H$, we denote with $H^J$ the graph obtained by overlapping two copies of $H$ on $J$. Formally, $H^J$ is the graph on the vertex set $V(J) \cup \bigcup_{h_i \notin V(J)} \{u_i, u_i'\}$ such that both $\phi \colon V(H) \rightarrow V(J) \cup \{u_i\}_{v_i \notin V(J)}$, given by $\phi(h_i) = h_i$ if $h_i \in V(J)$ and $\phi(h_i) = u_i$ otherwise, and $\phi' \colon V(H) \rightarrow V(J) \cup \{u_i'\}_{v_i \notin V(J)}$ given by $\phi(h_i) = h_i$ if $h_i \in V(J)$ and $\phi(h_i) = u_i'$ otherwise, are isomorphisms.

A \emph{canonical} copy of an induced $H' \subseteq H$ in $G \subseteq K_n^H$ is given by edge-preserving $\phi: V(H') \rightarrow V(G)$ such that $\phi(h_i) \in V_i$. Similarly, a canonical copy of $H^J$, for some induced $J \subseteq H$, is given by an injective (and edge-preserving) $\phi: V(H^J) \rightarrow V(K_n^H)$ such that $\phi(h_i) \in V_i$ if $h_i \in J$, and otherwise $\phi(u_i), \phi(u_i') \in V_i$. Given $G \subseteq K_n^H$, we let $N_G(H')$ and $N_G(H^J)$ denote the number of canonical copies of $H'$ and $H^J$ in $G$. For a pair of vertices $\vv \in K_n^H$, let $\deg_G(\vv, H')$ and $\deg_G(\vv, H^J)$ denote the number of canonical copies which contain both vertices in $\vv$.

Conjecture \ref{conj:probabilistic_intuition} follows from \ref{p1} in the following theorem applied with $\lfloor n/v(H) \rfloor$ (as $n$).

\begin{theorem} \label{thm:main_ind}
For every graph $H$ with $e(H) \ge 1$, there exist $c, T, \gamma > 0$ such that in a $b$-Waiter-Client game on $K_n^H$, for $b < cn^{1/m(H)}$ and $n$ sufficiently large, Waiter can ensure the resulting graph $G \subseteq K_n^H$ has the following properties for $p = 1/ b$:
\begin{enumerate}[(P1)]
    \item \label{p1} $N_G(H) \ge \gamma n^{v(H)} p^{e(H)}$,
    \item \label{p2} $N_G(H') \le T n^{v(H')} p^{e(H')}$ for every induced $H' \subseteq H$, and
    \item \label{p3} $N_G(H^J) \le T n^{2v(H) - v(J)} p^{2e(H) - e(J)}$ for every induced $J \subseteq H$.
\end{enumerate}
\end{theorem}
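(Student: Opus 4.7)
I would proceed by induction on $e(H)$, strengthening the inductive claim so that Waiter, in a single game on $K_n^H$, simultaneously ensures (P1)--(P3) for $H$ and for every induced subgraph $H'' \subseteq H$ with $e(H'') \geq 1$. The base case $e(H) = 1$ is immediate, since $N_G(H) = |E(G)|$: Waiter plays $T := \lfloor \alpha n^2 p \rfloor$ rounds for a small $\alpha = \alpha(H)$ and declares the game over, which trivially gives all three properties. The same stopping rule applies in the inductive step.

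The inductive strategy is built around maintaining uniform \emph{degree bounds} on extensions: throughout the game,
\[
    \deg_G(\vv, H') \le C\, n^{v(H')-2}\, p^{e(H')-1}
    \quad\text{and}\quad
    \deg_G(\vv, H^J) \le C\, n^{2v(H)-v(J)-2}\, p^{2e(H)-e(J)-1},
\]
for every pair $\vv$, every induced $H' \subseteq H$ with $e(H') \geq 1$, and every induced $J \subseteq H$, where $C = C(H)$ is a sufficiently large constant. Summing $\deg_G(e, H')$ over $e \in E(G)$ via $e(H') \cdot N_G(H') = \sum_e \deg_G(e, H')$ (and similarly for $H^J$) immediately yields (P2) and (P3) at the end of the game since $|E(G)| = T = \Theta(n^2 p)$. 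In each round, Waiter offers $b$ edges from the \emph{safe set} $A := \{e \in E(K_n^H) \setminus E(G) : \text{adding } e \text{ breaks no invariant}\}$.

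The technical heart is the feasibility claim $|A| \ge b$ at every round. An edge $e$ is unsafe iff there exist $(\vv, H')$ (or $(\vv, H^J)$) with $\deg_G(\vv, H')$ at its threshold such that $e$ completes a new canonical copy through $\vv$. A double-counting argument bounds the number of unsafe edges: for each $H'$, the count of $\vv$ at threshold is bounded via the inductive (P2) upper bound on $N_G(H' \setminus \{e_*\})$, and the number of completing edges per such $\vv$ by the maintained degree bound itself. This shows the total number of unsafe edges is, up to constants, at most $n^{v(H^*)} p^{e(H^*)-1}$, where $H^* \subseteq H$ is a densest subgraph. The hypothesis $b \le c n^{1/m(H)}$ is exactly what makes this at most $n^2/2$ for $c$ small enough (the tightness comes precisely from $H^*$, since the exponent $v(H^*) - 2 - (e(H^*)-1)/m(H)$ then vanishes), so $|A| \ge |E(K_n^H)|/2 \gg b$.

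For (P1), I would apply the inductive hypothesis to $H \setminus \{e'\}$ for some $e' \in E(H)$, noting $m(H \setminus \{e'\}) \le m(H)$ so the regime $b \le cn^{1/m(H)}$ is preserved; this yields $N_G(H \setminus \{e'\}) \ge \gamma' n^{v(H)} p^{e(H)-1}$ at the end of the game, and indeed throughout, by induction. Combined with the matching upper-bound invariant on $\deg_G(\cdot, H \setminus \{e'\})$, a reverse-Markov pigeonhole produces $\Theta(n^2 p)$ distinct \emph{completing} edges of high degree $\Omega(n^{v(H)-2} p^{e(H)-1})$ in $H \setminus \{e'\}$. Waiter augments its strategy to always offer $b$ such high-degree completing edges from the safe set, so that whichever edge Client picks contributes $\Omega(n^{v(H)-2} p^{e(H)-1})$ new copies of $H$; summing over $T = \Theta(n^2 p)$ rounds yields $N_G(H) = \Omega(n^{v(H)} p^{e(H)})$. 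The main obstacle is precisely this last step: showing that enough high-degree completing edges lie inside $A$ at every round, so that Waiter can honour both the invariants (for the upper bounds) and the completion requirement (for the lower bound) simultaneously. Resolving it requires a finer double-counting separating ``threshold-completing'' from ``$H$-completing'' edges, and this is where the bound $b \le cn^{1/m(H)}$ must be used in its sharp form.
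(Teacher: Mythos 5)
Your overall plan (induction on $e(H)$, trying to keep both upper- and lower-bound invariants alive simultaneously and showing the ``safe set'' stays large) is in the right spirit, but there are two concrete problems, and you yourself flag the second one as unresolved.

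First, the base case $e(H)=1$ is \emph{not} immediate. When $H$ is a single edge on $\{x,y\}$, the family of overlap graphs $H^J$ includes $H^{\{x\}}$, i.e.\ a cherry (a path of length two through a vertex of $V_x$). Property \ref{p3} for this $J$ demands $N_G(\text{cherry}) \le T n^3 p^2$, and this does \emph{not} hold automatically for an arbitrary graph with $\Theta(n^2 p)$ edges --- Client could concentrate all chosen edges at a few vertices and create $\Theta(n^4 p^2)$ cherries. Even here Waiter must be careful, offering only edges whose endpoints have bounded current degree; the paper devotes a short argument to exactly this point, and your remark about the same ``stopping rule'' carrying over trivially misses it.

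Second, and more importantly, you explicitly acknowledge that ``the main obstacle is precisely this last step: showing that enough high-degree completing edges lie inside $A$.'' That is the whole theorem; without it there is no proof. The actual resolution requires an idea your sketch does not contain: the second-moment bound
\[
  \sum_{\vv} \deg_{G_e}(\vv, H_e)^2 \;\le\; 2 \sum_{\{x,y\}\subseteq J \subseteq H_e} N_{G_e}(H_e^J),
\]
which is precisely where \ref{p3} earns its keep. This is what lets a Paley--Zygmund argument show that the set $L$ of pairs with $\deg_{G_e}(\vv, H_e) \gtrsim \E[\deg]$ has size $\Omega(n^{v(F)}p^{e(F)})$, with $F$ the densest subgraph of $H_e$ containing $\{x,y\}$ --- and this is exactly the quantity that the constraint $b < c n^{1/m(H)}$ makes $\gg b$. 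The paper then throws away a constant fraction of $L$ (the sets $B_2, B_3$) to restore the upper-bound invariants, and plays the remaining rounds entirely inside the surviving set $L'$. This two-phase structure --- fully finish the $H_e$-game, \emph{then} compute $L'$ from the realised $G_e$ and play only on $L'$ --- is what lets the lower bound and the upper bounds coexist; it is not an interleaved maintenance of pointwise degree invariants for all $H'\subseteq H$ throughout a single phase, as you propose.

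Your pointwise-maximum-degree invariant $\deg_G(\vv, H')\le C n^{v(H')-2}p^{e(H')-1}$ for \emph{all} pairs is also stronger than anything the paper maintains, and it is not obvious it can be maintained: a single edge added by Client may cause this degree to jump by a large amount at a pair $\vv$ far from the offered edges, and bounding the number of offered edges whose addition would trigger such a jump already requires a running version of \ref{p3}, which in your scheme is supposed to be a \emph{consequence} of the invariant. The paper sidesteps this circularity entirely by only constraining the degree on the pairs currently being offered, via the explicit sets $L'$ and the running check \eqref{eq:G_plus_e}.
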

\begin{proof}
For a fixed integer $h \ge 2$, using induction on the number of edges we show that the claim holds for all graphs $H$ with $h$ vertices.

Consider some graph $H$ with exactly one edge $e = \{x,y\}$. We show that the statement holds for $c = 1/8$, $\gamma = 1/16$ and $T = 2$. The property \ref{p2} holds regardless which edges $V_x \times V_y$ Waiter offers, and $\ref{p1}$ holds as long as Waiter plays at least $\lceil \gamma n^2 / b \rceil < n^2 / (8b)$ rounds (that is, Waiter should not call for an early termination before round $n^2 / (8b)$). To satisfy \ref{p3}, it suffices to show that Waiter can play such that the resulting graph $G$ contains at most $Tn^3 p^2$ `cherries' in $V_x \times V_y$, where a \emph{cherry} consists of a vertex and two edges incident to it. Waiter proceeds in $n^2 / (8b)$ rounds, and in each round chooses $b$ new edges in $V_x \times V_y$ such that each offered edge is incident to at most $2np$ edges which have been picked by Client so far. Note that it could happen that, say, $p < 1/(2n)$, in which case Waiter only offers edges which are not incident to any of the edges currently in $G$ (consequently, the resulting graph is matching). In any event, the addition of each such edge increases the number of cherries by at most $2np$, thus, with room to spare, we get the desired bound on the total number of cherries. It remains to show that such $b$ edges always exist during the first $n^2 / (8b)$ rounds. Consider some round $i \le n^2 / (8b)$, and let us denote with $B_x \subseteq V_x$ and $B_y \subseteq V_y$ the set of vertices with more than $np$ incident edges in the current graph $G$, that is with degree larger than $np$. From $e(G) = i \le n^2 / (8b)$ we conclude $|B_x|, |B_y| \le n/8$. Any edge in $(V_x \setminus B_x) \times (V_y \setminus B_y)$ satisfies a desired property, and the choice of constants implies at least $b$ of them have not been previously offered. This concludes the proof of the base case. A similar, though slightly more general argument is used in the proof of the induction step.

Consider now some $H$ with $h$ vertices and $e(H) > 1$, and suppose the claim holds for $H_e = H \setminus e$, for an arbitrary edge $e = \{x, y\} \in H$. We treat $H_e$ as a spanning subgraph of $H$, meaning that even if $x$ or $y$ become isolated in $H_e$ we still keep them. In the first phase of the game, Waiter plays on $K_n^{H_e}$ such that the resulting graph $G_e \subseteq K_n^{H_e}$ satisfies \ref{p1}--\ref{p3} with respect to $H_e$, for some positive $c, T$, and $\gamma$. We now describe a strategy for playing on the edges $V_x \times V_y$ such that \ref{p1}--\ref{p3} hold for $H$ with
$$
    \gamma' = \frac{\gamma^3}{2^{v(H_e) + 6} T^2},\; T' = 2^{2v(H_e)+1} T,\; \text{ and } c' = \min\left\{ c, \gamma^2 / (2^{v(H_e) + 6} T) \right\}.
$$

Let $P \subseteq V_x \times V_y$ denote the set of all pairs of vertices which belong to a copy of $H_e$ in $G_e$, and
$$
    L = \left\{ \vv \in P \colon \deg_{G_e}(\vv, H_e) \ge N_{G_e}(H_e) / (2|P|) \right\}.
$$
We use the following two observations to derive an estimate on the size of $L$:
\begin{equation} \label{eq:sum_deg}
    \sum_{\vv \in P} \deg_{G_e}(\vv, H_e) = N_{G_e}(H_e) \stackrel{\ref{p1}}{\ge} \gamma n^{v(H_e)} p^{e(H_e)},
\end{equation}
and
\begin{equation} \label{eq:sum_deg2}
    \sum_{\vv \in P} \deg_{G_e}(\vv, H_e)^2 \le 2 \sum_{\{x,y\} \subseteq J \subseteq H_e} N_{G_e}(H_e^J) \stackrel{\ref{p3}}{\le} 2^{v(H_e)} T \frac{n^{2v(H_e)} p^{2e(H_e)}}{n^{v(F)}p^{e(F)}},
\end{equation}
where the second sum in \eqref{eq:sum_deg2} goes over induced subgraphs and $\{x, y\} \subseteq F \subseteq H_e$ is an induced subgraph which minimizes $n^{v(F)} p^{e(F)}$. Consider a random variable $X = \deg_{G_e}(\vv, H_e)$ for $\vv \in P$ chosen uniformly at random. Using the Paley-Zygmund inequality and $\mathbb{E}[X] = N_{G_e}(H_e) / |P|$ (follows from the first equality in \eqref{eq:sum_deg}), we get:
\begin{align}
    |L| = |P| \cdot \Pr[\vv \in L] &= |P| \cdot \Pr[\deg_{G_e}(\vv, H_e) \ge \mathbb{E}[X]/2] \nonumber \\
    &\ge (|P| / 4) \cdot \frac{(\mathbb{E}[X])^2}{\mathbb{E}[X^2]} \stackrel{\eqref{eq:sum_deg}, \eqref{eq:sum_deg2}}{\ge} \frac{\gamma^2}{2^{v(H_e) + 2} T} n^{v(F)} p^{e(F)}. \label{eq:L_size}
\end{align}

The number of copies of $F$ in $G_e$ is clearly an upper-bound on the size of $P$, as every pair of vertices $\mathbf{v} \in V_x \times V_y$ which belongs to $H_e$ necessarily belongs to $F$, thus by \ref{p2} we have $|P| \le Tn^{v(F)} p^{e(F)}$. Therefore, by the definition of $L$ and \eqref{eq:sum_deg}, for every $\vv \in L$ we have
\begin{equation} \label{eq:deg_L}
    \deg_{G_e}(\vv, H_e) \ge \frac{\gamma}{2T} n^{v(H_e) - v(F)} p^{e(H_e) - e(F)}.
\end{equation}
From \eqref{eq:L_size} and \eqref{eq:deg_L} we conclude that offering edges in $L$, in any order, results in  $\Omega(n^{v(H)} p^{e(H)})$ copies of $H$ regardless of which edges are chosen. Note that it is crucial here that $L$ is larger than $b$, which follows from the upper bound on $b$. It remains to show that Waiter can offer a fraction of these edges such that \ref{p2} and \ref{p3} hold as well.

Let $C = T2^{v(H_e) + 2}$. For each induced $H' \subseteq H_e$ which contains $\{x,y\}$, set
\begin{equation} \label{eq:p2}
    B_2(H') = \left\{ \vv \in V_x \times V_y \colon \deg_{G_e}(\vv, H') \ge C n^{v(H')} p^{e(H')} / |L| \right\}.
\end{equation}
By the inductive assumption \ref{p2}, we have $|B_2(H')| \le 2^{-(v(H_e)+2)} |L|$. Similarly, for each induced $J \subseteq H_e$ (with no restriction on the containment of $x$ and $y$), set
\begin{equation} \label{eq:p3}
    B_3(J) = \left\{ \vv \in V_x \times V_y \colon \deg_{G_e}(\vv, H_e^J) \ge C n^{2v(H_e) - v(J)} p^{2e(H_e) - e(J)} / |L| \right\}.
\end{equation}
By \ref{p3}, we have $|B_3(J)| \le 2^{-(v(H_e)+2)} |L|$. Therefore, the set
$$
    L' = L \setminus \left( \bigcup_{\{x,y\} \subseteq H' \subseteq H_e} B_2(H') \cup \bigcup_{J \subseteq H_e} B_3(J) \right)
$$
is of size $|L'| \ge |L|/2$. Note that as long as Waiter only plays on $L'$,  \ref{p2} is satisfied for every $H' \subseteq H$ and \ref{p3} is satisfied for those $J \subseteq H$ which contain both $x$ and $y$ (note that here we indeed mean $H$ and not $H_e$).

Let us now, finally, continue playing the game. In each of the next $|L'|/(2b)$ rounds, Waiter chooses $b$ new pairs from $L'$ such that, for every induced $J \subseteq H$ which does not contain both $x$ and $y$ (thus it is also induced in $H$), each offered edge $e$ satisfies
\begin{equation} \label{eq:G_plus_e}
\deg_{G + e}(e, H^J) \le 2^{v(H_e)+1} C \cdot n^{2v(H_e) - v(J)} p^{2e(H_e) - e(J) + 1} / |L'|,
\end{equation}
where $G$ denotes the current graph (i.e.\ $G_e$ and all the edges from $L'$ chosen by Client so far). If this is possible, that is, there are always $b$ such free edges, then after $|L'|/(2b) \ge 1$ rounds the resulting graph $G$ satisfies all three required properties.

It remains to show that such $b$ edges are always available. Suppose, towards a contradiction, that at some point there are more than $|L'| 2^{-(v(H_e)+1)}$ edges which violate \eqref{eq:G_plus_e} for some induced $J$ with $\{x, y\} \not \subseteq J$. Let us denote such \emph{bad} edges with $B$, and let us denote the set of edges in $L'$ chosen by Client so far with $Q$. We now count the number $h_J$ of copies of $H^J$ in $G_e \cup Q \cup B$ with one edge corresponding to $(x,y)$ in $Q$ and the other in $B$, in two ways: Once from the point of view of $B$ and once from $Q$. We get
$$
     |B| \cdot 2^{v(H_e)+1}C n^{2v(H_e) - v(J)} p^{2e(H_e) - e(J) + 1} / |L'| \stackrel{\eqref{eq:G_plus_e}}{<} h_J \stackrel{\eqref{eq:p3}}{\le} |Q| \cdot Cn^{2v(H_e) - v(J)} p^{2e(H_e) - e(J)} / |L|,
$$
and the contradiction follows from $|B| \ge |L'|2^{-(v(H_e)+1)}$,  $|Q| < |L'| / (2b)$, and $|L'| \ge |L|/2$. This finishes the proof.
\end{proof}

\section{Concluding remarks} \label{sec:concluding}
The fact that it was an upper bound on the number of certain subgraphs that forced the copies of $H_e$ to be sufficiently uniformly distributed in the proof of Theorem \ref{thm:main_ind} might seem surprising at first, but it is a common theme in the study of pseudo-random graphs. The most prominent result of this kind, credited to Chung, Graham, and Wilson \cite{chung89quasi}, states that if a graph $G$ with density $p$ contains at most $(p^4 + o(1))n^4$ copies of labelled $4$-cycles, then $G$ enjoys a very strong edge-discrepancy property. The idea used in our case, namely controlling the distribution of the copies of a graph through an upper bound on the number of copies of its possible overlaps, to the best of the author's knowledge goes back to the seminal result of R\"odl and Ruci\'nski \cite{rodl95ramseythreshold} and was instrumental in a breakthrough by Schacht \cite{schacht16extremal} on extremal properties of random discrete structures. It is interesting to notice that the proof of Theorem \ref{thm:main_ind} is the first application of these ideas in the regime where the resulting graph has a density far below the density needed for copies of $H^e$ to cover $\Theta(n^2)$ pairs of vertices in $V_x \times V_y$.

Without any modifications, Theorem \ref{thm:main_ind} also works in the case of $k$-uniform hypergraphs. Another interesting example are $k$-term arithmetic progressions. Here the game is played on the set $\{1, \ldots, n\}$, where in each turn Waiter chooses $b$ numbers which have not previously been chosen, and Client picks one and adds it to the initially empty set $S$. The goal of Waiter is to maximise the number of $k$-term arithmetic progressions in $S$, and the goal of Client is to minimize this quantity. If $S$ was a randomly chosen subset of $n/b$ integers, then the threshold for the appearance of a $k$-term arithmetic progression in $S$ is $b = n^{2/k}$. On the one hand, the proof of \cite[Theorem 1.3 (i)]{bednarsa14picker}, based on the potential method, shows that for $b \gg n^{2/k}$ Client has a strategy which avoids $k$-term progressions. On the other hand, a straightforward modification of calculations in Theorem \ref{thm:main_ind} shows that for $b \ll n^{2/k}$ Waiter has a strategy which ensures $\Omega(n^2 / b^k)$ such progressions.

In general, to specify a Waiter-Client game we need a (finite) set $X$ and a family $\cF$ of subsets of $X$. The game proceeds in rounds, where in each round Waiter chooses $b$ new elements from $X$ and  Client picks one and adds it to the set $S$. The goal of Waiter is to maximise the number of sets $F \in \cF$ which are contained in $S$, and the goal of Client is to minimise this quantity. In the case of $H$-games, the set $X$ corresponds to the edges of $K_n$ and $\cF$ corresponds to those subsets of the edges which give a graph isomorphic to $H$. In the case of arithmetic progressions we naturally have $X = \{1, \ldots, n\}$ and $\cF$ are subsets of numbers which form a $k$-term arithmetic progression. Even though the two arguably most interesting cases, small subgraphs and arithmetic progressions, follow from the approach presented in this paper, it would be interesting to obtain a generalisation of Theorem \ref{thm:main_ind} which would capture sufficient properties of $\cF$ and directly apply in the general setting. A statement along these lines was obtained by Beck \cite[Theorem 1]{beck02picker} in the case $b=2$.

\textbf{Acknowledgment.} The author would like to thank Danny Hefetz for many helpful comments on the manuscript.

{\small \bibliographystyle{abbrv} \bibliography{graph_games}}

\end{document}